\documentclass{amsart}

\usepackage{graphicx,amssymb,latexsym,amsfonts,txfonts,amsmath,amsthm}
\usepackage{pdfsync,color,enumitem}

\usepackage{hyperref}
\hypersetup{
    colorlinks=true,       
    linkcolor=blue,          
    citecolor=blue,        
    filecolor=blue,      
    urlcolor=blue           
}

\newtheorem{theo}{Theorem}
\newtheorem{coro}{Corollary}
\newtheorem{propo}{Proposition}
\newtheorem{lemm}{Lemma}

\theoremstyle{remark}
\newtheorem{rema}{\bf Remark}

\begin{document}

\title{A simple remark on holomorphic maps on Torelli space of marked spheres}

\author{Ruben A. Hidalgo}
\address{Departamento de Matem\'atica y Estad\'{\i}stica, Universidad de La Frontera. Temuco, Chile}
\email{ruben.hidalgo@ufrontera.cl}

\thanks{Partially supported by Project Fondecyt 1230001}

\maketitle


\begin{abstract}
The configuration space of $k \geq 3$ ordered points in the Riemann sphere $\widehat{\mathbb C}$ is the 
Torelli space ${\mathcal U}_{0,k}$; a complex manifold of dimension $k-3$. 
If $m,n \geq 4$ and
$F:{\mathcal U}_{0,m} \to {\mathcal U}_{0,n}$ is a non-constant holomorphic map, then we observe that (i) $n \leq m$ and (ii) each coordinate of $F$ is given by a cross-ratio.
\end{abstract}

\section{Introduction}
The Torelli space ${\mathcal U}_{g,m+1}$, where 
$g \geq 0, m \geq -1$ are integers such that $3g-2+m>0$, 
 is the one that parametrizes pairs $(S,(p_{1},\ldots,p_{m+1}))$, where $S$ is a closed Riemann surface of genus $g$  and $p_{1}, \ldots, p_{m+1} \in S$ are pairwise different points, up to biholomorphisms (two such pairs $(S_{1},(p_{1},\ldots,p_{m+1}))$ and  $(S_{2},(q_{1},\ldots,q_{m+1}))$ are biholomorphically equivalents if there is a biholomorphism $f:S_{1} \to S_{2}$ such that $f(p_{j})=q_{j}$, for every $j$). This space is a complex orbifold of dimension $3g-2+m$ \cite{Imayoshi,Nag}. Note that ${\mathcal U}_{g,0}={\mathcal M}_{g}$ is the moduli space of closed Riemann surfaces of genus $g$, and ${\mathcal U}_{0,3}$ is just one point.
 
In \cite{AAS}, Antonakoudis-Aramayona-Souto have proved that,
if $g \geq 6$ and $g' \leq 2g-2$, then every non-constant holomorphic map ${\mathcal U}_{g,m+1} \to {\mathcal U}_{g',n+1}$ is a forgetful map (in particular, $g'=g$ and $n \leq m$). In the same paper, it is also observed that, for $g \geq 4$, every non-constant holomorphic map ${\mathcal U}_{g,m+1} \to {\mathcal U}_{g,m+1}$ is induced by a permutation of the marked points. In \cite{ALS}, Aramayona-Leiniger-Souto proved that for every $g \geq 2$ there is some $g'>g$ and an holomorphic embedding ${\mathcal M}_{g}={\mathcal U}_{g,0} \hookrightarrow {\mathcal M}_{g'}={\mathcal U}_{g',0}$. 
In this paper, we consider the genus zero situation, that is, we study (non-constant) holomorphic maps ${\mathcal U}_{0,m+1} \to {\mathcal U}_{0,n+1}$, where $m,n \geq 3$. 

Let us first observe that, if $k \geq 3$, then the Torelli space ${\mathcal U}_{0,k+1}$ (of dimension $k-2$) can be naturally identified with the domain 
$\Omega_{k}=\{(z_{1},\ldots,z_{k-2}): z_{j} \in {\mathbb C}\setminus\{0,1\}, z_{i} \neq z_{j}, \; i\neq j\} \subset {\mathbb C}^{k-2}$ and that its group of holomorphic automorphisms is ${\mathbb G}_{k}=\langle A, B \rangle$, where
$$A(z_{1},\ldots,z_{k-2})=(z_{1}^{-1},\ldots,z_{k-2}^{-1}),$$
$$B(z_{1},\ldots,z_{k-2})=\left(\frac{z_{k-2}}{z_{k-2}-1},\frac{z_{k-2}}{z_{k-2}-z_{1}},\ldots,\frac{z_{k-2}}{z_{k-2}-z_{k-3}}\right).$$

If $k=3$, then ${\mathcal U}_{0,4}=\Omega_{3}=\widehat{\mathbb C} \setminus \{\infty,0,1\}$ and 
${\mathbb G}_{3} \cong {\mathfrak S}_{3}$. In this case, every non-constant holomorphic map $F:\Omega_{3} \to \Omega_{3}$ is an automorphism (so given as one of the six possible cross ratios of the points $\{\infty,0,1,z\}$, for $z \in \Omega_{3}$). If $k \geq 4$, then each element of ${\mathbb G}_{k} \cong {\mathfrak S}_{k+1}$ can be described in terms of cross-ratios using four points inside the set $\{\infty,0,1,z_{1},\ldots,z_{k-2}\}$ (see Section \ref{Sec:explicito}).

As an application of the Picard theorem for Riemann surfaces (due to Royden \cite{Royden}), we can obtain the following.

\begin{theo}\label{main}
Let $F:\Omega_{m} \to \Omega_{n}$, where $n,m \geq 3$ are integers, a non-constant holomorphic map. Then  
\begin{enumerate}[leftmargin=15pt]
\item[(a)] $n \leq m$, and 
\item[(b)] there exist:
\begin{enumerate}
\item[(i)] a subset $\{j_{1},\ldots,j_{n-2}\} \subset \{1,\ldots,m-2\}$, where $j_{i} \neq j_{l}$ for $i \neq l$, and 
\item[(ii)] some $T=(T_{1},\ldots,T_{m-2}) \in {\mathbb G}_{m}$,
\end{enumerate}
such that $F=(T_{j_{1}},\ldots,T_{j_{n-2}})$.
\end{enumerate}
\end{theo}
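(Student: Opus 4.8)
The plan is to analyze $F$ one coordinate at a time. Write $F=(F_{1},\ldots,F_{n-2})$, so that each $F_{i}:\Omega_{m}\to{\mathbb C}\setminus\{0,1\}$ is holomorphic and, because $F$ takes values in $\Omega_{n}$, the $F_{1},\ldots,F_{n-2}$ are pairwise distinct at every point of $\Omega_{m}$ and each omits $0,1$ (and $\infty$). Thus every $F_{i}$ is a holomorphic map $\Omega_{m}\to\Omega_{3}=\widehat{\mathbb C}\setminus\{0,1,\infty\}$. I claim the theorem reduces to the single assertion that \emph{each $F_{i}$ is a cross-ratio of four of the $m+1$ points $\{\infty,0,1,z_{1},\ldots,z_{m-2}\}$, and that the $F_{i}$ share a common triple among their four points}. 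Granting this, let $\{p,q,r\}$ be the common triple and $s_{1},\ldots,s_{n-2}$ the respective fourth points, and let $g$ be the M\"obius transformation sending $p,q,r$ to $\infty,0,1$ (it depends on the point of $\Omega_{m}$ when the triple involves moving points). Then $F_{i}=g(s_{i})$, so the target configuration $(\infty,0,1,F_{1},\ldots,F_{n-2})$ is $g$ applied to the selection $\{p,q,r,s_{1},\ldots,s_{n-2}\}$ of $n+1$ of the $m+1$ marked points — that is, $F$ is a forgetful map followed by the automorphism $T\in{\mathbb G}_{m}\cong{\mathfrak S}_{m+1}$ realizing this relabelling, which gives part (b). Moreover these $n+1$ points are \emph{distinct} members of a set of size $m+1$, forcing $n+1\le m+1$ and hence part (a).

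Next I would produce the rational structure of each $F_{i}$ by means of Royden's theorem. Regard $\Omega_{m}$ as the complement inside $(\widehat{\mathbb C})^{m-2}$ of the boundary divisor ${\mathcal D}=\bigcup_{\ell}\{z_{\ell}=0\}\cup\bigcup_{\ell}\{z_{\ell}=1\}\cup\bigcup_{i\ne j}\{z_{i}=z_{j}\}$ (together with the locus at infinity). Restricting $F_{i}$ to a small disk meeting a smooth point of a component of ${\mathcal D}$ transversally gives a holomorphic map from a punctured disk into the thrice-punctured sphere $\Omega_{3}$; since $\Omega_{3}$ is hyperbolically embedded in $\widehat{\mathbb C}$, the Picard theorem of Royden lets this map extend holomorphically across the puncture into $\widehat{\mathbb C}$. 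Hence $F_{i}$ extends holomorphically across the smooth locus of ${\mathcal D}$, and as the remaining indeterminacy lies in codimension at least two, a Levi-type removability theorem upgrades this to a meromorphic, hence rational, map $\widehat{F_{i}}:(\widehat{\mathbb C})^{m-2}\to\widehat{\mathbb C}$. The decisive point is that, since $F_{i}$ omits $0,1,\infty$ on all of $\Omega_{m}$, one has $\widehat{F_{i}}^{-1}(\{0,1,\infty\})\subseteq{\mathcal D}$.

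It then remains to classify the rational functions with this property. Writing $\widehat{F_{i}}=P/Q$ with $P,Q$ coprime, the hypersurfaces $\{P=0\}$, $\{Q=0\}$ and $\{P-Q=0\}$ all lie in ${\mathcal D}$, so each of $P$, $Q$, $P-Q$ is, up to a constant, a product of the linear forms $z_{\ell}$, $z_{\ell}-1$ and $z_{i}-z_{j}$. An elementary analysis of such additive relations among products of linear forms — governed at bottom by the three-term identities $(z_{i}-z_{j})+(z_{j}-z_{k})=(z_{i}-z_{k})$ and $z_{\ell}+(1-z_{\ell})=1$ underlying every cross-ratio — shows that the only coprime solutions are constants or cross-ratios of four of the $m+1$ points (up to $\mathrm{Aut}(\Omega_{3})\cong{\mathfrak S}_{3}$, which permutes cross-ratios among themselves). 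The constant case is excluded: a coordinate $F_{i}\equiv c\in\Omega_{3}$ would be met by every non-constant coordinate (these, being non-constant cross-ratios, are surjective onto $\Omega_{3}$), contradicting $F_{i}\ne F_{j}$, and not every $F_{i}$ is constant since $F$ is non-constant. Hence each $F_{i}$ is a non-constant cross-ratio. Finally, that the $F_{i}$ share a common triple is forced again by $F_{i}\ne F_{j}$ on \emph{all} of $\Omega_{m}$: the zero divisor of $\widehat{F_{i}}-\widehat{F_{j}}$ must lie in ${\mathcal D}$, and for two cross-ratios this can occur only when they involve the same three points and differ in the fourth.

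I expect the main obstacle to be the passage from the one-variable Picard statement to the global rational model: controlling the extension across the deeper (codimension $\ge 2$) strata of ${\mathcal D}$ and, above all, verifying that the three special fibres $\widehat{F_{i}}^{-1}(0)$, $\widehat{F_{i}}^{-1}(1)$, $\widehat{F_{i}}^{-1}(\infty)$ are confined to the boundary divisor — this is precisely where the hyperbolicity of $\Omega_{3}$ and Royden's theorem do the essential work. Once this rigidity is secured, the cross-ratio classification, the exclusion of constant coordinates, and the common-triple argument are elementary and finite, and part (a) then drops out by merely counting the distinct points involved.
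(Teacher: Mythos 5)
Your proposal is correct and follows essentially the same route as the paper: each coordinate $F_i:\Omega_m\to\Omega_3$ is shown, via Royden's Picard theorem, to extend to a rational map whose fibres over $0,1,\infty$ lie in the union of the hyperplanes $\{z_\ell=0\}$, $\{z_\ell=1\}$, $\{z_i=z_j\}$, hence to be a cross-ratio of four of the $m+1$ marked points, and the coordinates are then assembled into an element of ${\mathbb G}_m$ followed by a coordinate projection using the observation (the paper's Proposition~\ref{prop1}) that two such cross-ratios avoid each other on all of $\Omega_m$ exactly when they share a triple and differ in the remaining point. The only inessential differences are that you obtain global rationality by extending across the boundary divisor plus a codimension-two removability argument where the paper slices variable by variable, and that your point-counting argument for part (a) replaces the paper's separate fixed-point argument in the cases with $m=3$.
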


The result above shows that any non-constant holomorphic map between Torelli spaces of genus zero and at least 4 points can be expressed as a composition of an automorphism
$\Omega_{m} \to \Omega_{m}$ and a forgetful map $\Omega_{m} \to \Omega_{n}$, with $n \leq m$.
The above is related to \cite[Conjecture 1.2]{GeGre} at the level of Teichm\"uller spaces.

Below, we consider the usual cross-ratio $[a,b,c,d]:=(d-b)(c-a)/(d-a)(c-b)$. As already mentioned above, every non-constant holomorphic map $F:\Omega_{3} \to \Omega_{3}$ is an automorphism. The following provides a generalization.

\begin{coro}\label{coro1}
If $F:\Omega_{m} \to \Omega_{3}$ is a non-constant holomorphic map, where $m \geq 3$, then there is an ordered tuple $(i_{1},i_{2},i_{3},i_{4}) \in \{1,\ldots,m+1\}^{4}$, where $i_{j} \neq i_{k}$ for $i \neq k$, such that 
$$F(z_{1},\ldots,z_{m-2})=[p_{i_{1}}(z),p_{i_{2}}(z),p_{i_{3}}(z),p_{i_{4}}(z)],$$ where 
$p_{1}(z)=\infty$, $p_{2}(z)=0$, $p_{3}(z)=1$, $p_{4}(z)=z_{1}, \ldots, p_{m+1}(z)=z_{m-2}$. 
\end{coro}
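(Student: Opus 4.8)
The plan is to derive Corollary \ref{coro1} directly from Theorem \ref{main}. Since the target is $\Omega_3$, whose dimension is $3-2=1$, the map $F:\Omega_m \to \Omega_3$ is a non-constant holomorphic function into a one-dimensional space, so $n=3$ in the notation of the theorem. Applying Theorem \ref{main}, I first obtain an automorphism $T=(T_1,\ldots,T_{m-2}) \in {\mathbb G}_m$ and a single index $j_1 \in \{1,\ldots,m-2\}$ (the subset $\{j_1,\ldots,j_{n-2}\}$ has exactly $n-2=1$ element) such that $F = T_{j_1}$. Thus the entire content of the corollary is unpacking what a single coordinate $T_{j_1}$ of an element of ${\mathbb G}_m$ looks like concretely.

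\medskip

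The key step is therefore to invoke the structural description announced in the introduction: the group ${\mathbb G}_m \cong {\mathfrak S}_{m+1}$, generated by $A$ and $B$, acts on the configuration by permuting the $m+1$ points $\{p_1,\ldots,p_{m+1}\} = \{\infty,0,1,z_1,\ldots,z_{m-2}\}$, and each coordinate of any $T \in {\mathbb G}_m$ is a cross-ratio of four of these points (this is the fact developed in Section \ref{Sec:explicito}, which I am entitled to assume). Concretely, I would verify this on the generators: the coordinate functions of $A$ and of $B$ are each cross-ratios $[p_{i_1},p_{i_2},p_{i_3},p_{i_4}]$ of four distinct points among the $p_i$, and since the cross-ratio coordinates transform among themselves under the ${\mathfrak S}_{m+1}$-action (post-composition by a Möbius transformation of the four marked points permutes which four-point cross-ratio one reads off), any word in $A,B$ again has each coordinate equal to such a cross-ratio. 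In particular the single coordinate $T_{j_1}=F$ equals $[p_{i_1}(z),p_{i_2}(z),p_{i_3}(z),p_{i_4}(z)]$ for some ordered tuple of distinct indices $(i_1,i_2,i_3,i_4) \in \{1,\ldots,m+1\}^4$, which is exactly the claimed form with $p_1=\infty$, $p_2=0$, $p_3=1$, $p_{3+l}=z_l$.

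\medskip

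I would then handle the boundary case $m=3$ separately only to reassure the reader it is consistent: there $\Omega_3$ is one-dimensional, ${\mathbb G}_3 \cong {\mathfrak S}_3$, and every non-constant holomorphic self-map is one of the six cross-ratio automorphisms, which is precisely the statement for $m=3$ (here the four points are $\infty,0,1,z_1$ and the tuple $(i_1,i_2,i_3,i_4)$ is a permutation of $(1,2,3,4)$). For general $m$ the extra freedom is that the four chosen indices need not exhaust a fixed set, reflecting that $T$ may move points and forget all but one coordinate.

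\medskip

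The main obstacle I anticipate is purely bookkeeping rather than conceptual: one must be careful that the cross-ratio convention $[a,b,c,d]=(d-b)(c-a)/\bigl((d-a)(c-b)\bigr)$ matches the way the generators $A,B$ are written, so that the identification of $T_{j_1}$ with a cross-ratio produces the \emph{correct} ordered tuple and not one differing by a coordinate change (which would alter $F$ by a Möbius transformation, i.e.\ an element of ${\mathbb G}_3$). Pinning down the exact ordered $4$-tuple $(i_1,i_2,i_3,i_4)$ realizing a given coordinate of a given word in $A,B$ requires tracking the ${\mathfrak S}_{m+1}$-action explicitly; everything else follows immediately from Theorem \ref{main} with $n=3$.
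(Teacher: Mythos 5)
Your proposal is correct and is essentially the paper's intended derivation: the paper gives no separate proof of the corollary, which is meant to follow exactly as you describe from Theorem \ref{main} with $n=3$ (so $F=T_{j_{1}}$ is a single coordinate of some $T\in{\mathbb G}_{m}$) combined with the explicit formula $T_{j}^{\sigma}(z)=M_{\sigma}(p_{\sigma^{-1}(3+j)})=[p_{\sigma^{-1}(1)},p_{\sigma^{-1}(2)},p_{\sigma^{-1}(3)},p_{\sigma^{-1}(3+j)}]$ established in the proof of the Lemma of Section \ref{Sec:explicito}. Your detour through the generators $A,B$ and closure under composition is unnecessary given that explicit formula, but it is harmless.
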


\section{Preliminaries}\label{Sec:prelim}

\subsection{The Torelli's space ${\mathcal U}_{0,k+1}$}
Let $k \geq 3$ be an integer and let us fix $k+1$ different points $p_{1},\ldots, p_{k+1}$ on the two-dimensional sphere $S^{2}$. Two orientation-preserving homeomorphisms $\phi_{1}, \phi_{2}:S^{2} \to \widehat{\mathbb C}$ are called equivalent if there exists a M\"obius transformation $A \in {\rm PSL}_{2}({\mathbb C})$ such that $A(\phi_{1}(p_{j}))=\phi_{2}(p_{j})$, for every $j=1,\ldots,k+1$, and $\phi_{2}^{-1} \circ A \circ \phi_{1}$ is homotopic to the identity relative the set $\{p_{1},\ldots,p_{k+1}\}$. The space of the above equivalence classes is the Teichm\"uller space ${\mathcal T}_{0,k+1}$.; which is known to be a $k-2$ dimensional complex manifold, homeomorphic to a ball (see, for instance, \cite{Hubbard,Imayoshi}).  

Let ${\rm Hom}^{+}(\{p_{1},\ldots,p_{k+1}\})$ be the group of orientation-preserving homeomorphisms of $S^{2}$ keeping invariant the set $\{p_{1},\ldots,p_{k+1}\}$. Let ${\rm Hom}^{+}((p_{1},\ldots,p_{k+1}))$ be its subgroup of those homeomorphisms fixing each of the points $p_{j}$, and let ${\rm Hom}_{0}((p_{1},\ldots,p_{k+1}))$ be its (normal) subgroup of those being homotopic to the identity relative to the set $\{p_{1},\ldots,p_{k+1}\}$. The quotient group ${\rm Mod}_{0,k+1}={\rm Hom}^{+}(\{p_{1},\ldots,p_{k+1}\})/{\rm Hom}_{0}((p_{1},\ldots,p_{k+1}))$ 
(respectively, ${\rm PMod}_{0,k+1}={\rm Hom}^{+}((p_{1},\ldots,p_{k+1}))/{\rm Hom}_{0}((p_{1},\ldots,p_{k+1}))$)
is called the 
modular group (respectively, the pure modular group). 

\begin{rema}
If ${\rm Aut}({\mathcal T}_{0,k+1})$ denotes the group of holomorphic automorphisms of ${\mathcal T}_{0,k+1}$, then 
there is a natural surjective homomorphism $\Theta:{\rm Mod}_{0,k+1} \to {\rm Aut}({\mathcal T}_{0,k+1})$ \cite{E-K}.
If, moreover, $k \geq 4$, then $\Theta$ is an isomorphism \cite{E-G, Epstein, Matsuzaki}.
 \end{rema}
 
The modular group ${\rm Mod}_{0,k+1}$ acts discontinuously on ${\mathcal T}_{0,k+1}$ as group of holomorphic automorphisms and the quotient orbifold ${\mathcal M}_{0,k+1}={\mathcal T}_{0,k+1}/{\rm Mod}_{0,k+1}$ is the moduli space that parametrizes the conformal structures, up to biholomorphisms, of unordered $(k+1)$-marked spheres. This space is a $(k-2)$-dimensional complex orbifold whose orbifold fundamental group is ${\rm Mod}_{0,k+1}$ (see, for instance, \cite{Bers1,Bers2,Hubbard}). 

The pure modular group ${\rm PMod}_{0,k+1}$ is known to act freely on ${\mathcal T}_{0,k+1}$, so the quotient space ${\mathcal U}_{0,k+1}={\mathcal T}_{0,k+1}/{\rm PMod}_{0,k+1}$, called the Torelli space of $(k+1)$-marked spheres, is a complex manifold of dimension $k-2$. This space parametrizes ${\rm PSL}_{2}({\mathbb C})$-equivalence classes of ordered $(k+1)$-points in $\widehat{\mathbb C}$. The group of holomorphic automorphisms of ${\mathcal U}_{0,k+1}$ is given by the quotient group 
${\rm Aut}({\mathcal U}_{0,k+1})={\rm Mod}_{0,k+1}/{\rm PMod}_{0,k+1}$.

\subsection{An explicit model of ${\mathcal U}_{0,k+1}$ and its automorphisms}\label{Sec:explicito}
As the group of M\"obius transformations acts triple-transitive, for each tuple $(q_{1},\ldots,q_{k+1}) \in \widehat{\mathbb C}^{k+1}$ there is a unique M\"obius transformation $T$ such that  
$$(T(q_{1})=\infty, T(q_{2})=0, T(q_{3})=1, T(q_{4})=z_{1},\ldots, T(q_{k+1})=z_{k-2}).$$

It follows that a model for ${\mathcal U}_{0,k+1}$, $k \geq 3$, is given by the domain \cite{Patterson}
$$\Omega_{k}=\{(z_{1},\ldots,z_{k-2}): z_{j} \in {\mathbb C}\setminus\{0,1\}, z_{i} \neq z_{j}, \; i\neq j\} \subset {\mathbb C}^{k-2}.$$

Let us set ${\mathbb G}_{k}:={\rm Aut}(\Omega_{k}) \cong {\rm Aut}({\mathcal U}_{0,k+1})$.

If $k=3$, then ${\mathcal U}_{3}=\widehat{\mathbb C} \setminus \{\infty,0,1\}$ and ${\mathbb G}_{3}={\rm Aut}(\Omega_{3})=\langle A(z)=z^{-1}, B(z)=\frac{z}{z-1}\rangle$.

\begin{rema}
If $k=3$, then the quotient group ${\rm Mod}_{0,4}/{\rm PMod}_{0,4} \cong {\mathfrak S}_{4}$ has a normal subgroup $K \cong {\mathbb Z}_{2}^{2}$ acting trivially. Also, ${\mathcal M}_{0,4}=\Omega_{3}/{\mathbb G}_{3}={\mathbb C}$ (with two cone points, one of order $2$ and the other of order $3$). 
\end{rema}

\begin{lemm}
If $k \geq 4$, then 
${\mathbb G}_{k}=\langle A,B\rangle \cong {\mathfrak S}_{k+1}$, 
where $$A(z_{1},\ldots,z_{k-2})=(z_{1}^{-1},\ldots,z_{k-2}^{-1}),$$
$$B(z_{1},\ldots,z_{k-2})=\left(\frac{z_{k-2}}{z_{k-2}-1},\frac{z_{k-2}}{z_{k-2}-z_{1}},\ldots,\frac{z_{k-2}}{z_{k-2}-z_{k-3}}\right).$$
\end{lemm}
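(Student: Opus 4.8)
The plan is to recognize $A$ and $B$ as the coordinate descriptions of two explicit permutations of the $k+1$ marked points, to check that those two permutations generate the full symmetric group, and then to conclude via the identification $\mathbb{G}_k \cong {\rm Mod}_{0,k+1}/{\rm PMod}_{0,k+1} \cong {\mathfrak S}_{k+1}$ recorded in Section \ref{Sec:prelim}.

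First I would recall that a point $(z_1,\ldots,z_{k-2}) \in \Omega_k$ is the canonical representative of the ordered configuration $c=(\infty,0,1,z_1,\ldots,z_{k-2})$, normalized so that its first three entries are $\infty,0,1$. The group ${\rm Mod}_{0,k+1}/{\rm PMod}_{0,k+1}\cong{\mathfrak S}_{k+1}$ acts on $\Omega_k$ by permuting the entries of $c$ and then reapplying the normalization, that is, by post-composing with the unique M\"obius transformation that returns the first three points to $\infty,0,1$. I would then compute the coordinate effect of two distinguished elements. For the transposition $\tau=(1\,2)$, which interchanges $\infty$ and $0$, the renormalizing M\"obius transformation is $w\mapsto 1/w$ (it fixes $1$ and swaps $0,\infty$), and this sends $z_i\mapsto z_i^{-1}$; this is exactly $A$. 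For a full cycle $\gamma=(1\,2\,\cdots\,k+1)$, permuting the entries of $c$ moves $z_{k-2}$ into the first slot, $\infty$ into the second and $0$ into the third; the renormalizing transformation is then $S(w)=z_{k-2}/(z_{k-2}-w)$, because $S(z_{k-2})=\infty$, $S(\infty)=0$ and $S(0)=1$. Applying $S$ to the remaining entries $1,z_1,\ldots,z_{k-3}$ reproduces exactly the coordinates of $B$. Hence $A,B\in\mathbb{G}_k$, and they realize $\tau$ and $\gamma$ respectively.

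Next I would invoke the elementary fact that an adjacent transposition $\tau=(1\,2)$ together with a $(k+1)$-cycle $\gamma$ generate ${\mathfrak S}_{k+1}$: conjugating $\tau$ by the powers $\gamma^{j}$ produces every adjacent transposition $(1+j,\,2+j)$ (indices modulo $k+1$), and these generate the symmetric group. Consequently the subgroup $\langle A,B\rangle\subseteq\mathbb{G}_k$ surjects onto ${\mathfrak S}_{k+1}$ under the permutation correspondence.

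Finally, since for $k\geq 4$ the action of ${\rm Mod}_{0,k+1}/{\rm PMod}_{0,k+1}$ on ${\mathcal U}_{0,k+1}$ is faithful (unlike the case $k=3$, where the kernel is ${\mathbb Z}_2^2$), the correspondence $\mathbb{G}_k\cong{\mathfrak S}_{k+1}$ is an isomorphism onto the full symmetric group; as $\langle A,B\rangle$ already surjects onto ${\mathfrak S}_{k+1}$, the inclusion $\langle A,B\rangle\subseteq\mathbb{G}_k$ must be an equality, giving $\mathbb{G}_k=\langle A,B\rangle\cong{\mathfrak S}_{k+1}$. The step I expect to be most delicate is the coordinate computation for $B$: one has to keep track simultaneously of the permutation of the entries of $c$, the correct renormalizing M\"obius transformation, and the resulting shift of the coordinate indices, and then verify that the composite is the stated formula on the nose. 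The faithfulness input for $k\geq 4$ is what separates this situation from $k=3$ and should be cited rather than recomputed.
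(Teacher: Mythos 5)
Your proof is correct and follows essentially the same route as the paper: both realize the ${\mathfrak S}_{k+1}$-action on $\Omega_{k}$ by permuting the entries of $(\infty,0,1,z_{1},\ldots,z_{k-2})$ and renormalizing with the M\"obius transformation returning the first three entries to $\infty,0,1$, and both use ${\rm Aut}({\mathcal U}_{0,k+1})={\rm Mod}_{0,k+1}/{\rm PMod}_{0,k+1}$ as the a priori bound. If anything, you are more explicit than the paper on the point the lemma actually asserts, namely that $A$ and $B$ generate: you identify them as the images of a transposition and a $(k+1)$-cycle and invoke the standard generation fact, whereas the paper constructs the full isomorphism $\Theta_{k}$ and leaves that identification implicit.
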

\begin{proof}
It is not difficult to check that $A$ and $B$, as described in the lemma, are holomorphic automorphisms of $\Omega_{k}$.
To see the above, one first observe that ${\mathbb G}_{k} \leq {\rm Aut}(\Omega_{k})$. Secondly, as ${\rm Aut}({\mathcal U}_{0,k+1})={\rm Mod}_{0,k+1}/{\rm PMod}_{0,k+1}$ is a subgroup of ${\mathfrak S}_{k+1}$, we only need to check that ${\mathbb G}_{k} \cong {\mathfrak S}_{k+1}$. 
Next, we provide an explicit isomorphism 
$$\Theta_{k}:{\mathfrak S}_{k+1} \to {\mathbb G}_{k}: \sigma \mapsto (T_{1}^{\sigma},\ldots,T_{k-2}^{\sigma}).$$

Let $\sigma \in {\mathfrak S}_{k+1}$. If $(z_{1},\ldots,z_{k-2}) \in \Omega_{k}$, then
consider the two tuples 
$$P=(p_{1}=\infty, p_{2}=0, p_{3}=1, p_{4}=z_{1}, \ldots, p_{k+1}=z_{k-2}),$$ 
$$P_{\sigma}=(p_{\sigma^{-1}(1)}, p_{\sigma^{-1}(2)}, p_{\sigma^{-1}(3)}, p_{\sigma^{-1}(4)}, \ldots, p_{\sigma^{-1}(k+1)}).$$

There is a unique M\"obius transformation $M_{\sigma}$ such that 
$$M_{\sigma}(p_{\sigma^{-1}(1)})=\infty, \; M_{\sigma}(p_{\sigma^{-1}(2)})=0, \; M_{\sigma}(p_{\sigma^{-1}(3)})=1,$$
which is given explicitly as the cross-ratio
$$M_{\sigma}(x)=[p_{\sigma^{-1}(1)},p_{\sigma^{-1}(2)},p_{\sigma^{-1}(3)},x].$$

If we apply $M_{\sigma}$ to each of the coordinates of the tuple $P_{\sigma}$ we obtain the new tuple
$$(\infty,0,1,M_{\sigma}(p_{\sigma^{-1}(4)}), \ldots, M_{\sigma}(p_{\sigma^{-1}(k+1)})).$$

It can be seen that $(M_{\sigma}(p_{\sigma^{-1}(4)}), \ldots, M_{\sigma}(p_{\sigma^{-1}(k+1)})) \in \Omega_{k}$. 

Moreover, if  $T_{j}^{\sigma}(z_{1},\ldots,z_{k-2}):=M_{\sigma}(p_{\sigma^{-1}(3+j)})$, then 
$T_{j}^{\sigma}:\Omega_{k} \to \Omega_{3}$ is surjective and extends to a holomorphic map 
$T_{j}^{\sigma}:{\mathbb C}^{k-2} \to \widehat{\mathbb C}$
such that $$(T_{j}^{\sigma})^{-1}(\{\infty,0,1\})=\bigcup_{j=1}^{k-2} \left(\{z_{j}=0\} \cup\{z_{j}=1\}\right) \bigcup_{i\neq j}\{z_{i}=z_{j}\}.$$

Now, it is not difficult to see that $\Theta_{k}(\sigma)=(T_{1}^{\sigma},\ldots,T_{k-2}^{\sigma}) \in {\mathbb G}_{k}$. 
\end{proof}

\subsection{\bf Forms of the coordinates $T_{j}^{\sigma}$ of $\Theta_{k}$}\label{Sec:formas}
In the above proof, we have explicitly constructed $\Theta(\sigma)=T^{\sigma}$. Below, we write down the explicit forms of its 
coordinates.

\subsubsection*{\rm(1)}
If $k=4$, then the values of $T_{j}^{\sigma}(z_{1},z_{2})$ ($j=1,2$) are as follows ($s,j \in \{1,2\}$, $s \neq l$):
$$z_{s}, 1/z_{s},\; 1-z_{s},\; 1/(1-z_{s}),\; z_{s}/(z_{s}-1),\; (z_{s}-1)/z_{s}, 
z_{s}/z_{l},\; (z_{s}-z_{l})/z_{s}, \; z_{s}/(z_{s}-z_{l}),$$
$$(z_{s}-1)/(z_{l}-1),\; (z_{s}-z_{l})/(z_{s}-1),\; (z_{s}-1)/(z_{s}-z_{l}), \; z_{l}(z_{s}-1)/(z_{s}-z_{l}),\; (z_{s}-z_{l})/(z_{l}(z_{s}-1),$$
$$z_{s}(z_{l}-1)/z_{l}(z_{s}-1).$$

\subsubsection*{\rm(2)}
If $k=5$, then $T_{j}^{\sigma}(z_{1},z_{2},z_{3})$ ($j=1,2,3$) are as follows ($s,l,i \in \{1,2,3\}$ pairwise different):
$$(z_{i}-z_{s})/z_{s}(z_{i}-1),\; z_{i}(z_{s}-1)/(z_{s}-z_{i}),\; z_{i}(z_{s}-1)/z_{s}(z_{i}-1), \; z_{i}(z_{l}-z_{s})/z_{l}(z_{i}-z_{s}),$$ 
$$(z_{i}-1)(z_{l}-z_{s})/(z_{l}-1)(z_{i}-z_{s}),\; (z_{l}-z_{s})/(z_{i}-z_{s}),\; z_{i}/(z_{i}-z_{s}),\; (z_{i}-z_{s})/z_{i},\; z_{i}/z_{s},$$
$$(z_{i}-1)/(z_{i}-z_{s}),\; (z_{i}-z_{s})/(z_{i}-1),\; (z_{i}-1)/(z_{s}-1).$$

\subsubsection*{\rm(3)}
If $k \geq 6$, then $T_{j}^{\sigma}(z_{1},\ldots,z_{k-2})$ ($j=1,\ldots,k-2$) are as follows ($s,l,i,r \in \{1,\ldots,k-2\}$ pairwise different):
$$(z_{s}-z_{i})(z_{r}-z_{l})/(z_{s}-z_{l})(z_{r}-z_{i}), 
z_{i}(z_{r}-z_{l})/z_{l}(z_{r}-z_{i}), \; (z_{i}-1)(z_{r}-z_{l})/(z_{l}-1)(z_{r}-z_{i}),$$
$$(z_{r}-z_{l})/(z_{r}-z_{i}),\; z_{i}/z_{r},\; z_{i}/(z_{i}-z_{r}),\; (z_{i}-z_{r})/z_{i}, 
(z_{i}-1)/(z_{r}-1),\; (z_{i}-1)/(z_{i}-z_{r}),\; (z_{i}-z_{r})/(z_{i}-1).$$

\begin{rema}
(1) In \cite{Igusa}, by using invariant theory, Igusa  observed that 
$$
\begin{array}{l}
\Omega_{4}/{\mathbb G}_{4}={\mathcal M}_{0,5}={\mathbb C}^{2}/\langle (x,y) \mapsto (-x,-y)\rangle,\\
\Omega_{5}/{\mathbb G}_{5}={\mathcal M}_{0,6}={\mathbb C}^{3}/\langle (x,y,z) \mapsto (\omega_{5} x, \omega_{5}^{2} y, \omega_{5}^{3} z)\rangle \quad (\omega_{5}=e^{2 \pi i/5}).
\end{array}
$$ 
(2) In \cite{Patterson}, Patterson proved that, for $k \geq 6$, the moduli space $\Omega_{k}/{\mathbb G}_{k}={\mathcal M}_{0,k+1}$ cannot be obtained as a quotient ${\mathbb C}^{k-2}/L$, where $L$ is a finite linear group.
\end{rema}

\subsection{Auxiliary facts}
\subsubsection{}
There is a natural permutation action of ${\mathfrak S}_{k-2}$ on $\Omega_{k}$, given by the permutation of the coordinates. This induces an embedding of ${\mathfrak S}_{k-2}$ as a subgroup of ${\mathbb G}_{k}$. Below, we proceed to describe it.

\begin{lemm}
If $T=(T_{1},\ldots,T_{k-2}) \in {\mathbb G}_{k}$ and $\tau \in {\mathfrak S}_{k-2}$, then $(T_{\tau(1)},\ldots,T_{\tau(k-2)}) \in {\mathbb G}_{k}$.
\end{lemm}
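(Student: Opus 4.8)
The plan is to reduce the statement to the fact that $\mathbb{G}_{k}={\rm Aut}(\Omega_{k})$ is a group, by recognizing the operation of permuting the coordinate functions of $T$ as post-composition with a single fixed coordinate-permuting automorphism of $\Omega_{k}$. Concretely, for $\tau \in {\mathfrak S}_{k-2}$ I would introduce the map $R_{\tau}:{\mathbb C}^{k-2} \to {\mathbb C}^{k-2}$ defined by $R_{\tau}(w_{1},\ldots,w_{k-2})=(w_{\tau(1)},\ldots,w_{\tau(k-2)})$, and observe the identity $(T_{\tau(1)},\ldots,T_{\tau(k-2)})=R_{\tau}\circ T$, which holds because the $j$-th coordinate of $R_{\tau}(T(z))$ is exactly $T_{\tau(j)}(z)$. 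Since $\mathbb{G}_{k}={\rm Aut}(\Omega_{k})$ is by definition the full group of holomorphic automorphisms of $\Omega_{k}$, it is closed under composition, so it suffices to check that both factors lie in $\mathbb{G}_{k}$. The factor $T$ is in $\mathbb{G}_{k}$ by hypothesis, so the entire statement comes down to verifying that $R_{\tau}\in\mathbb{G}_{k}$.

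First I would verify that $R_{\tau}$ restricts to a holomorphic automorphism of $\Omega_{k}$. The defining conditions of $\Omega_{k}$, namely that each entry lies in ${\mathbb C}\setminus\{0,1\}$ and that distinct entries take distinct values, are symmetric under permutation of the coordinates; hence $R_{\tau}(\Omega_{k})=\Omega_{k}$. Moreover $R_{\tau}$ is linear, so holomorphic, and it is bijective with holomorphic inverse $R_{\tau^{-1}}$. Therefore $R_{\tau}\in{\rm Aut}(\Omega_{k})=\mathbb{G}_{k}$. Combining this with the identity of the previous paragraph, $R_{\tau}\circ T \in \mathbb{G}_{k}$, which is precisely the assertion of the lemma.

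There is essentially no hard step here: the only thing requiring genuine verification is that permuting coordinates preserves $\Omega_{k}$, and this is immediate from the symmetry of the defining inequalities. To match the surrounding discussion, which promises to describe the induced embedding ${\mathfrak S}_{k-2}\hookrightarrow\mathbb{G}_{k}$ explicitly, I would additionally identify $R_{\tau}$ with $\Theta_{k}(\rho)$, where $\rho\in{\mathfrak S}_{k+1}$ fixes $1,2,3$ and induces $\tau^{-1}$ on $\{4,\ldots,k+1\}$ under the index shift $i\mapsto 3+i$. Indeed, when $\rho$ fixes $1,2,3$ the normalizing M\"obius map satisfies $M_{\rho}(x)=[\infty,0,1,x]=x$, so $T_{j}^{\rho}(z)=p_{\rho^{-1}(3+j)}$, and the condition $\rho^{-1}(3+j)=3+\tau(j)$ recovers exactly $R_{\tau}$. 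This simultaneously confirms that $\tau\mapsto R_{\tau}$ is an injective homomorphism, giving the claimed embedding of ${\mathfrak S}_{k-2}$ into $\mathbb{G}_{k}$.
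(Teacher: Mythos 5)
Your proof is correct and follows essentially the same route as the paper: both write $(T_{\tau(1)},\ldots,T_{\tau(k-2)})$ as the post-composition of $T$ with the coordinate-permuting automorphism, which the paper realizes directly as $\Theta_{k}(\sigma)$ for a $\sigma\in{\mathfrak S}_{k+1}$ fixing $1,2,3$. Your additional direct verification that $R_{\tau}\in{\rm Aut}(\Omega_{k})$ and the identification $R_{\tau}=\Theta_{k}(\rho)$ only make explicit what the paper leaves implicit.
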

\begin{proof}
If $\sigma=(1)(2)(3)\widehat{\tau} \in {\mathfrak S}_{k+1}$, where $\widehat{\tau}(j+3)=\tau(j)$, then $\Theta_{k}(\sigma)(z_{1},\ldots,z_{k-2})=(z_{\tau(1)},\ldots, z_{\tau(k-2)})$ and
$\Theta_{k}(\sigma) \circ T \in {\mathbb G}_{k}$.
\end{proof}

\subsubsection{}
For $k \geq 4$, we set $I_{k}=\{1,2,\ldots,k+1\}$ and $A_{k}=I_{k}^{4} \setminus {\rm diagonals}$. For each tuple $C:=(i_{1},i_{2},i_{3},i_{4}) \in A_{k}$ we set
$$L_{C}:\Omega_{k} \to \Omega_{3}: z=(z_{1},\ldots,z_{k-2}) \mapsto L_{C}(z)=[p_{i_{1}}(z),p_{i_{2}}(z),p_{i_{3}}(z),p_{i_{4}}(z)],$$
where
$$p_{1}(z)=\infty, p_{2}(z)=0, p_{3}(z)=1, p_{4}(z)=z_{1}, \ldots, p_{k+1}(z)=z_{k-2}.$$

\begin{propo}\label{prop1}
Let $k \geq 4$ and let $C_{1},\ldots,C_{l} \in A_{k}$. Then
\begin{enumerate}[leftmargin=15pt]
\item If $(L_{C_{1}},\ldots,L_{C_{l}}):\Omega_{k} \to \Omega_{l+2}$ is a non-constant holomorphic map, then $l \leq k-2$.

\item If $l \leq k-2$ and $(L_{C_{1}},\ldots,L_{C_{l}}):\Omega_{k} \to \Omega_{l+2}$ is a non-constant holomorphic map, then there exists $T \in {\mathbb G}_{k}$ such that $T=(L_{C_{1}},\ldots,L_{C_{l}},T_{l+1},\ldots,T_{k-2})$.

\end{enumerate}
\end{propo}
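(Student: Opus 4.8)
The plan is to reduce both statements to a single combinatorial fact, and then read off (1) and (2) for free. Write $S_i\subset I_k$ for the underlying $4$-set of $C_i$, and recall from the structure of $\mathbb{G}_k$ that the coordinates $T_1^{\sigma},\dots,T_{k-2}^{\sigma}$ of $\Theta_k(\sigma)$ are exactly the cross-ratios $[p_{\sigma^{-1}(1)},p_{\sigma^{-1}(2)},p_{\sigma^{-1}(3)},p_{\sigma^{-1}(3+j)}]$: they all carry the \emph{same} first three points and use each remaining index once as the fourth. So I would prove the following claim: if $(L_{C_1},\dots,L_{C_l})$ maps into $\Omega_{l+2}$, then there is a $3$-set $\{A,B,C\}\subset I_k$, a fixed ordering of it, and distinct $d_1,\dots,d_l\in I_k\setminus\{A,B,C\}$ with $L_{C_i}=[A,B,C,d_i]$ (same frame ordering) for every $i$. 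Granting this, part (1) is immediate, since there are only $k+1-3=k-2$ available fourth indices, so $l\le k-2$; and part (2) follows by taking $\sigma\in\mathfrak{S}_{k+1}$ with $(\sigma^{-1}(1),\sigma^{-1}(2),\sigma^{-1}(3))$ equal to the common ordered frame and $\sigma^{-1}(3+i)=d_i$ for $i\le l$, extended to any bijection, so that $\Theta_k(\sigma)=(L_{C_1},\dots,L_{C_l},T_{l+1},\dots,T_{k-2})\in\mathbb{G}_k$.

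First I would observe that each $L_{C_i}$ is automatically non-constant on $\Omega_k$ (its four points are distinct there, and at least one of them is a variable $z_j$, so the cross-ratio depends on $z_j$); hence the only real content of ``maps into $\Omega_{l+2}$'' is the non-collision condition $L_{C_i}(z)\neq L_{C_j}(z)$ for all $z\in\Omega_k$ and $i\neq j$. Since each $L_{C_i}-L_{C_j}$ is a rational function on $(\widehat{\mathbb C})^{k-2}$, non-collision is equivalent to $L_{C_i}\not\equiv L_{C_j}$ together with the zero locus $\{L_{C_i}=L_{C_j}\}$ being contained in the boundary $\bigcup_{a<b}D_{ab}$, where $D_{ab}=\{p_a=p_b\}$ is the coincidence divisor.

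Next I would establish a pairwise lemma classifying non-collision by $t:=|S_1\cap S_2|$. If $t=3$ and the two cross-ratios are written with their common $3$-set as an identically ordered frame, then $L_{C_1}=M(p_{d_1})$ and $L_{C_2}=M(p_{d_2})$ for one Möbius map $M$, so they agree only on $D_{d_1 d_2}\subset\partial\Omega_k$: no collision. In every other case I would exhibit an actual collision inside $\Omega_k$. For $t=3$ with incompatible orderings the equation reads $M(p_{d_1})=\phi(M(p_{d_2}))$ with $\phi\neq\mathrm{id}$ anharmonic, which is solvable keeping $p_{d_1}\neq p_{d_2}$; for $t=4$ the two distinct anharmonic values of one quadruple coincide at a fixed point of the connecting anharmonic map, all of which lie in $\mathbb{C}\setminus\{0,1\}$ and are attained; and for $t\le 2$ one can move a point of $S_1\setminus S_2$ to force $L_{C_1}$ to equal a generically prescribed admissible value of $L_{C_2}$. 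Each of these reduces to a computation in the at most six points of $S_1\cup S_2$.

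Finally, the heart of the proof is to upgrade the pairwise lemma to the global claim by induction on $l$. Assuming $L_{C_1},\dots,L_{C_{l-1}}$ already share an ordered frame $\{A,B,C\}$ with distinct fourth indices, I must show $\{A,B,C\}\subseteq S_l$; once this holds, compatibility with $L_{C_1}$ pins the ordering to the common one and forces a fourth index distinct from $d_1,\dots,d_{l-1}$. If instead $\{A,B,C\}\not\subseteq S_l$, the pairwise lemma forces $S_l=\{X,Y,d_1,d_2\}$ with $\{X,Y\}\subset\{A,B,C\}$; then compatibility with $L_{C_1}$ and with $L_{C_2}$ yields two expressions for the one function $L_{C_l}$ which differ exactly by the transposition swapping the two points $d_1,d_2$ of $S_l$ (the model case being $u/v$ versus $v/u$). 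Since a transposition does not lie in the Klein four-group $V\subset\mathfrak{S}_4$ that stabilizes a cross-ratio value, these two expressions are genuinely different functions, a contradiction. I expect this step --- excluding the ``pairwise compatible but no common frame'' configurations via the fact that a transposition changes the cross-ratio --- to be the main obstacle; by contrast, the collision-existence cases of the pairwise lemma are routine, requiring only the care of checking that the relevant zero loci are not entirely absorbed into the boundary divisors.
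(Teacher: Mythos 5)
Your proposal is correct and rests on exactly the same key fact as the paper's proof: the classification of when two cross-ratio coordinates $L_{C_1},L_{C_2}$ never collide on $\Omega_k$, namely precisely when (after rewriting via the Klein four-group of cross-ratio symmetries) they share an ordered three-point frame and differ in the fourth point. The paper states this pairwise observation and declares that the proposition follows; you additionally supply the pairwise-to-global induction (ruling out the ``pairwise compatible but no common frame'' configurations via the fact that a transposition of the last two arguments inverts the cross-ratio), which is the step the paper leaves implicit.
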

\begin{proof}
The result follows from the following observation.
If $C_{1}=(i_{1},i_{2},i_{3},i_{4}), C_{2}=(j_{1},j_{2},j_{3},j_{4}) \in A_{k}$, then the equation $L_{C_{1}}(z)=L_{C_{2}}(z)$ has no solution for $z \in \Omega_{k}$ if and only if we are in any of the following four situations:
$$(a) \; (i_{1},i_{2},i_{3})=(j_{1},j_{2},j_{3}), \; i_{4} \neq j_{4}, \quad (b) \; (i_{1},i_{2},i_{4})=(j_{1},j_{2},j_{4}), \; i_{3} \neq j_{3},$$
$$(c) \; (i_{1},i_{3},i_{4})=(j_{1},j_{3},j_{4}), \; i_{2} \neq j_{2}, \quad (d) \; (i_{2},i_{3},i_{4})=(j_{2},j_{3},j_{4}), \; i_{1} \neq j_{1}.$$
\end{proof}

\subsubsection{}
Let us assume $3 \leq n \leq m$. If $\{j_{1},\ldots,j_{n-2}\} \subset \{1,\ldots,m-2\}$, then we set the projection map
$$\pi_{(j_{1},\ldots,j_{n-2})}:\Omega_{m} \to \Omega_{n}:(z_{1},\ldots,z_{m-2}) \mapsto (z_{j_{1}},\ldots, z_{j_{n-2}}).$$

\begin{propo}
If $T \in {\mathbb G}_{n}$, then there exists $U \in {\mathbb G}_{m}$ such that 
$$\pi_{(j_{1},\ldots,j_{n-2})} \circ U= T \circ \pi_{(j_{1},\ldots,j_{n-2})}$$
\end{propo}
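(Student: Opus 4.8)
The plan is to reduce everything to the combinatorics of the symmetric group via the isomorphism $\Theta$ and the cross-ratio description of its coordinates. Since $\mathbb{G}_{n} \cong \mathfrak{S}_{n+1}$, I would write $T=\Theta_{n}(\sigma)$ for a unique $\sigma \in \mathfrak{S}_{n+1}$. The geometric idea is that $\pi_{(j_{1},\ldots,j_{n-2})}$ is the forgetful map keeping the marked points labelled by $1,2,3$ and $3+j_{1},\ldots,3+j_{n-2}$, and that $T$ permutes precisely these $n+1$ retained points; a compatible $U$ should permute all $m+1$ points of $\Omega_{m}$ in the same way on the retained labels and leave the forgotten ones fixed.

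First I would record the label bookkeeping. Define the injection $\iota:\{1,\ldots,n+1\}\to\{1,\ldots,m+1\}$ by $\iota(c)=c$ for $c\in\{1,2,3\}$ and $\iota(3+i)=3+j_{i}$ for $1\le i\le n-2$, and put $S=\iota(\{1,\ldots,n+1\})$. Writing the configurations as $P=(p_{1}=\infty,\ldots,p_{m+1}=z_{m-2})$ for $\Omega_{m}$ and $Q=(q_{1}=\infty,\ldots,q_{n+1}=w_{n-2})$ for $\Omega_{n}$, the definition of $\pi=\pi_{(j_{1},\ldots,j_{n-2})}$ yields, for $w=\pi(z)$, the identification $q_{a}=p_{\iota(a)}$ for every $a\in\{1,\ldots,n+1\}$ (indeed $q_{1},q_{2},q_{3}$ and $p_{1},p_{2},p_{3}$ are both $\infty,0,1$, while $q_{3+i}=w_{i}=z_{j_{i}}=p_{3+j_{i}}$).

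Next I would lift $\sigma$. Define $\hat\sigma\in\mathfrak{S}_{m+1}$ by $\hat\sigma(\iota(a))=\iota(\sigma(a))$ for $a\in\{1,\ldots,n+1\}$ and $\hat\sigma(b)=b$ for $b\notin S$; this is a genuine permutation because $\hat\sigma$ maps $S$ bijectively onto itself (as $\sigma$ permutes $\{1,\ldots,n+1\}$ and $\iota$ is injective) and fixes its complement pointwise. Set $U=\Theta_{m}(\hat\sigma)\in\mathbb{G}_{m}$. To verify $\pi\circ U=T\circ\pi$ I would compare the two sides coordinate by coordinate using the formula $T^{\tau}_{l}(\,\cdot\,)=[p_{\tau^{-1}(1)},p_{\tau^{-1}(2)},p_{\tau^{-1}(3)},p_{\tau^{-1}(3+l)}]$ obtained in the construction of $\Theta$. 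The $i$-th coordinate of $\pi(U(z))$ is $T^{\hat\sigma}_{j_{i}}(z)=[p_{\hat\sigma^{-1}(1)},p_{\hat\sigma^{-1}(2)},p_{\hat\sigma^{-1}(3)},p_{\hat\sigma^{-1}(3+j_{i})}]$, while the $i$-th coordinate of $T(\pi(z))$ is $[q_{\sigma^{-1}(1)},q_{\sigma^{-1}(2)},q_{\sigma^{-1}(3)},q_{\sigma^{-1}(3+i)}]=[p_{\iota(\sigma^{-1}(1))},p_{\iota(\sigma^{-1}(2))},p_{\iota(\sigma^{-1}(3))},p_{\iota(\sigma^{-1}(3+i))}]$ by the identification above. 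Since the defining relation of $\hat\sigma$ is exactly $\hat\sigma^{-1}(\iota(a))=\iota(\sigma^{-1}(a))$ for all $a$, the four indices agree entry by entry, so the two cross-ratios coincide as functions of $z$, and the required identity holds on all of $\Omega_{m}$.

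The main (and essentially only) obstacle is the index bookkeeping: one must make sure that the forgetful projection $\pi$ corresponds exactly to the injection $\iota$ on labels, that $S$ is carried to itself by the lifted permutation, and that the inverses line up correctly in the cross-ratio formula. Once the dictionary $q_{a}=p_{\iota(a)}$ and the relation $\hat\sigma^{-1}\circ\iota=\iota\circ\sigma^{-1}$ are in place, the commutation is a direct term-by-term comparison rather than any genuine computation, and the choice of $\hat\sigma$ on the forgotten labels is irrelevant.
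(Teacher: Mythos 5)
Your proposal is correct and follows essentially the same route as the paper: both lift $\sigma$ to $\widehat{\sigma}\in{\mathfrak S}_{m+1}$ by conjugating with the label injection $\iota$ (identity on the forgotten labels) and take $U=\Theta_{m}(\widehat{\sigma})$. You additionally carry out the coordinate-by-coordinate cross-ratio verification of $\pi\circ U=T\circ\pi$, which the paper leaves implicit.
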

\begin{proof}
Let $T=\Theta_{n}(\sigma)$, for $\sigma \in {\mathfrak S}_{n+1}$. The idea is to use $\sigma$ to construct $\widehat{\sigma} \in {\mathfrak S}_{m+1}$ such that $U=\Theta_{m}(\widehat{\sigma})$. This is done as follows. 
Set $\widehat{\sigma}$ to fix all the points in $\{1,\ldots,m+1\}\setminus\{1,2,3,j_{1}+3,\ldots,j_{n-2}+3\}$, for $r=1,2,3$, we set 
$$\widehat{\sigma}(r)=\left\{\begin{array}{ll}
\sigma(r), & \mbox{if $\sigma(r)\in \{1,2,3\}$}\\
\sigma(j_{\sigma(r)-3})+3, &\mbox{if $\sigma(r)\notin \{1,2,3\}$} 
\end{array}
\right.
 $$
$$\widehat{\sigma}(j_{k}+3)=\left\{\begin{array}{ll}
\sigma(k+3), & \mbox{if $\sigma(k+3)\in \{1,2,3\}$}\\
\sigma(j_{\sigma(k+3)-3})+3, &\mbox{if $\sigma(k+3)\notin \{1,2,3\}$} 
\end{array}
\right.
 $$
\end{proof}

\subsubsection{Example}
In the above, let $m=5$, $n=4$ and $\{j_{1}=1,j_{2}=2\}$. In this case, the projection map is
$\pi_{(1,2)}:\Omega_{5} \to \Omega_{4}:(z_{1},z_{2},z_{3}) \mapsto (z_{1},z_{2})$. If $\sigma=(1)(2,3)(4,5)$ and $T=\Theta_{4}(\sigma)$, that is, $T(z_{1},z_{2})=(1-z_{2},1-z_{1})$, then $\widehat{\sigma}=(1)(2,3)(4,5)(6)$ and $U(z_{1},z_{2},z_{3})=(1-z_{2},1-z_{1},1-z_{3})$.

\subsection{The Picard Theorem for Riemann surfaces}
The proof of Theorem \ref{main} will be a consequence of the following.

\begin{theo}[The Picard theorem for Riemann surfaces \cite{Royden}]
Let $W$ be a hyperbolic Riemann surface (i.e., its universal cover is the hyperbolic plane) and let $f:\{0<|z|<1\} \to W$ be a holomorphic map. Then $f$ can be extended to a holomorphic map $\widetilde{f}:\{|z|<1\} \to W^{*}$, where $W^{*}$ is a Riemann surface containing $W$ (if $W^{*} \neq W$, then $W^{*}=W \cup \{p\}$).
\end{theo}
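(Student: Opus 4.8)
The plan is to recognize this as the ``big Picard theorem'' for hyperbolic targets and to establish it through the geometry of the Poincar\'e metric together with uniformization, which is the classical hyperbolic-metric argument behind Royden's result. Throughout I write $\Delta^{*}=\{0<|z|<1\}$ and I uniformize $W={\mathbb H}/\Gamma$, where ${\mathbb H}$ is the hyperbolic plane and $\Gamma$ is a Fuchsian group. Since $W$ is a Riemann surface (not an orbifold), the covering ${\mathbb H}\to W$ is free, so $\Gamma$ is torsion-free; in particular $\Gamma$ has no elliptic elements.

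First I would equip both sides with their complete hyperbolic metrics. On $\Delta^{*}$ the Poincar\'e metric is $|dz|/(|z|\log(1/|z|))$, for which the circle $\{|z|=r\}$ has length $2\pi/\log(1/r)\to 0$ as $r\to 0$. By the Schwarz--Pick lemma, $f$ is distance non-increasing, so the hyperbolic length $\ell(r)$ of the image loop $f(\{|z|=r\})$ in $W$ tends to $0$ as $r\to 0$. Next I would track the monodromy: the map $f$ induces a homomorphism $\pi_{1}(\Delta^{*})\cong{\mathbb Z}\to\pi_{1}(W)\cong\Gamma$, and I let $\gamma\in\Gamma$ be the image of the generator (the loop around the puncture), splitting the argument according to the type of $\gamma$.

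If $\gamma$ is hyperbolic, the curves $f(\{|z|=r\})$ are freely homotopic to the closed geodesic in their class, whose length $\ell_{0}>0$ is a lower bound for $\ell(r)$, contradicting $\ell(r)\to 0$; so this case cannot occur. The elliptic case is excluded because $\Gamma$ is torsion-free. If $\gamma$ is trivial, then $f$ lifts to a single-valued map $\widetilde{f}\colon\Delta^{*}\to{\mathbb H}$; identifying ${\mathbb H}$ with the unit disk makes $\widetilde{f}$ bounded, so Riemann's removable-singularity theorem extends it holomorphically to $\Delta$, and the maximum principle forbids the extended value at $0$ from lying on the boundary circle. Hence $\widetilde{f}(0)\in{\mathbb H}$ and, projecting, $f$ extends holomorphically into $W$ (the case $W^{*}=W$).

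The remaining---and genuinely delicate---case is $\gamma$ parabolic, which is exactly the one producing the extra point. Here $\gamma$ fixes a point $\xi\in\partial{\mathbb H}$ and, after normalizing $\xi=\infty$ and $\gamma\colon w\mapsto w+1$ in the upper half-plane model, a horoball $\{\mathrm{Im}\,w>c\}$ descends to a punctured-disk cusp neighborhood of an added ideal point $p$, so that $W^{*}=W\cup\{p\}$. Using the covering $\Delta^{*}={\mathbb H}/\langle z\mapsto z+1\rangle$ (via $z=e^{2\pi i w}$), I would lift $f$ to an equivariant $F\colon{\mathbb H}\to{\mathbb H}$ with $F(w+1)=F(w)+1$. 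The main obstacle is to promote the qualitative information ``$\ell(r)\to 0$'' into the quantitative statement that $\mathrm{Im}\,F(w)\to+\infty$ as $\mathrm{Im}\,w\to+\infty$, i.e.\ that the image genuinely descends into the cusp rather than merely having short but non-degenerating image loops. This is supplied by the thick--thin decomposition for hyperbolic surfaces, which forces a sufficiently short essential loop to lie inside a standard cusp neighborhood, arbitrarily deep as $\ell(r)\to 0$ (concretely, the horocycle $\mathrm{Im}\,w=y$ has hyperbolic length $1/y$, so short length means large height). Granting this, $f(z)\to p$ as $z\to 0$, and writing $f$ in the cusp coordinate one checks that the extension $\widetilde{f}(0)=p$ is holomorphic, completing the proof.
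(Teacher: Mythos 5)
The paper does not prove this statement at all: it is quoted as Royden's theorem, with the proof deferred entirely to \cite{Royden}, so there is no in-paper argument to compare yours against. What you have written is the standard hyperbolic-geometry proof of the big Picard theorem for hyperbolic targets, and as an outline it is correct and complete: the case division by the type of the monodromy element $\gamma$ (trivial, elliptic, hyperbolic, parabolic) is the right skeleton, the Schwarz--Pick length estimate $\ell(r)\le 2\pi/\log(1/r)$ is correct, and the exclusion of the hyperbolic case by the positive infimum of lengths in a hyperbolic free homotopy class is sound. Two places should be made precise if you wrote this out in full. First, in the parabolic case you can avoid the thick--thin decomposition entirely: the image of the circle $\{|z|=r\}$ lifts to a path from $F(w)$ to $F(w)+1$ of length at most $\ell(r)$, and the explicit displacement formula $\cosh d_{\mathbb H}(w,w+1)=1+1/(2(\operatorname{Im}w)^{2})$ converts $\ell(r)\to 0$ directly into $\operatorname{Im}F(w)\to\infty$; this is more elementary than invoking the Margulis lemma and gives exactly the quantitative statement you flag as the main obstacle. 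Second, you should justify that a sufficiently high horoball at the parabolic fixed point is precisely invariant under $\langle\gamma\rangle$ in $\Gamma$ (Shimizu's lemma), which is what guarantees that its quotient embeds in $W$ as a punctured disk and hence that $W^{*}=W\cup\{p\}$ is a genuine Riemann surface containing $W$ with $p$ admitting the coordinate in which your final removable-singularity argument is carried out. With those two standard facts supplied, your argument is a correct, self-contained proof of the cited theorem.
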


\section{Proof of Theorem \ref{main}}

\subsection{Case $m=n=3$}
Let $F:\Omega_{3} \to \Omega_{3}$ be a 
non-constant holomorphic map. 
By the Picard theorem for Riemann surfaces, $F$ extends to a holomorphic non-constant map $\widehat{F}:\widehat{\mathbb C} \to \widehat{\mathbb C}$, that is, a rational map. As $\widehat{F}^{-1}(\{\infty,0,1\})=\{\infty,0,1\}$ and $\widehat{F}$ is surjective, up to composition with an element of ${\mathbb G}_{3}$, we may assume that $\widehat{F}$ fix each of the points $\infty, 0,1$. It follows that $\widehat{F}$ is the identity. It follows that $F \in {\mathbb G}_{3}$.

\subsection{Case $m=3$ and $n \geq 4$}
Let $F=(F_{1},\ldots,F_{n-2}):\Omega_{3} \to \Omega_{n}$, $n \geq 4$, be a non-constatnt holomorphic map. Then each $F_{j}:\Omega_{3} \to \Omega_{3}$ is holomorphic. Let us assume that $F$ is non-constant. Then one of the coordinates $F_{j}$ is non-constant and, by the above-proved case $n=m=3$, it belongs to ${\mathbb G}_{3}$. Up to post-composition by a suitable element of ${\mathbb G}_{3}$, we may assume this is the case for $F_{1}$. We claim that every $F_{j}$ is non-constant. In fact, assume, for instance that $F_{j}$ (for some $j \in \{2,\ldots,n-2\}$) is a constant $\alpha \in \Omega_{3}$. As $F_{1}$ is surjective, there exists some $p \in \Omega_{3}$ such that $F_{1}(p)=\alpha=F_{j}(p)$. This asserts that $F(p)$ cannot belong to $\Omega_{n}$ as it will have two different coordinates equal to $\alpha$. Now, by the above-proved case $n=m=3$, all coordinates $F_{j} \in {\mathbb G}_{3}$, and they are pairwise different (for $F$ to have the image in $\Omega_{n}$). But, in this case, $F_{j}^{-1} \circ F_{1} \in {\mathbb G}_{3}\setminus\{I\}$, for $j=2,\ldots,n-2$. As each non-trivial element of ${\mathbb G}_{3}$ has fixed points in $\Omega_{3}$, it follows the existence of a point $p_{j} \in \Omega_{3}$ with $F_{j}(p_{j})=F_{1}(p_{j})$, which is a contradiction to the fact that $F(p_{j})\in \Omega_{n}$.

\subsection{Case $n=3$ and $m \geq 4$}
Let $F:\Omega_{m} \to \Omega_{3}$ be a non-constant holomorphic map, for $m \geq 4$. It follows that $F$ is non-constant at some of the variables $z_{j}$, $j=1,\ldots,m-2$. Without loss of generality, we may assume $F$ is non-constant in the variable $z_{1}$. For each $(z_{2},\ldots,z_{m-2}) \in \Omega_{m-1}$, if we set $\Omega(z_{2},\ldots,z_{m-2}):=\widehat{\mathbb C}\setminus \{\infty, 0,1, z_{2},\ldots, z_{m-2}\}$, then 
$$\Omega_{m}=\bigcup_{(z_{2},\ldots,z_{m-2})\in \Omega_{m-1}} \Omega(z_{2},\ldots,z_{m-2}).$$

Let us consider the restrictions $F:\Omega(z_{2},\ldots,z_{m-2}) \to \Omega_{3}$, which are non-constant holomorphic maps. Applying the Picard theorem for Riemann surfaces, we have that the restriction $F$ is the restriction of some rational map (in the variable $z_{1}$), say $\widehat{F}:\widehat{\mathbb C} \to \widehat{\mathbb C}$.
We also have that
$$\widehat{F}(\{\infty,0,1,z_{2},\ldots,z_{m-2}\})=\{\infty,0,1\}$$
$$\widehat{F}^{-1}(\{\infty,0,1\}) \subset \{\infty,0,1,z_{2},\ldots,z_{m-2}\}$$

In this way, the holomorphic map $F:\Omega_{m} \to \Omega_{3}$ is the restriction of a rational map
$$F(z_{1},\ldots,z_{m-2})=\frac{P(z_{1},\ldots,z_{m-2})}{Q(z_{1},\ldots,z_{m-2})},$$
where $P,Q \in {\mathbb C}[z_{1},\ldots,z_{m-2}]$ are realively prime polynomials.

We know that the locus of zeroes, poles, and preimages of $1$, by $F$, is given by the union of the following hyperplanes:
$$L_{j}=\{z_{j}=0\}, \; j=1,\ldots, m-2,$$
$$N_{j}=\{z_{j}=1\}, \; j=1,\ldots, m-2,$$
$$M_{ij}=\{z_{i}=z_{j}\}, \; 1 \leq i < j \leq  m-2,$$

If $P$ is non-constant, then we may write $P=\alpha P_{1}\cdots P_{r}$, where $\alpha \in {\mathbb C}\setminus\{0\}$ and $P_{j} \in {\mathbb C}[z_{1},\ldots,z_{m-2}]$ are monic irreducible polyniomials. As the zeroes of $P$ are inside the hyperplanes $L_{j}, N_{j}, M_{ij}$, then each of these irreducible polynomials must satisfy that $P_{l} \in \{z_{j}, z_{j}-1,z_{i}-z_{j}\}$.
By following similar arguments for $Q$, we may obtain that
$$F(z_{1},\ldots,z_{m-2})=\lambda \prod_{j=1}^{m-2}z_{j}^{a_{j}} \prod_{j=1}^{m-2}(z_{j}-1)^{b_{j}} \prod_{1 \leq i<j \leq m-2} (z_{i}-z_{j})^{c_{ij}},
\lambda \in {\mathbb C}\setminus\{0\}, a_{j},b_{j},c_{ij} \in {\mathbb Z}.$$

We also know that $F^{-1}(1) \subset \bigcup L_{j} \cup N_{j} \cup M_{ij}$. 

Our next claim is that $a_{j},b_{j},c_{ij} \in \{-1,0,1\}$. In fact, for example, assume that $a_{1} \notin \{-1,0,1\}$. In such a situation, the equation $F=1$ will provide a polynomial of degree at least two in the variable $z_{1}$. This will provide a solution in $\Omega_{m}$, a contradiction. The other instances are similar.

Now, we observe that 
$$\sum_{a_{j}>0}a_{j}+\sum_{b_{j}>0}b_{j}+\sum_{c_{ij}>0}c_{ij} \leq 2, \quad 
\sum_{a_{j}<0}a_{j}+\sum_{b_{j}<0}b_{j}+\sum_{c_{ij}<0}c_{ij} \geq -2.$$

Again, otherwise, the equation $F \in \{\infty,0,1\}$ will provide solutions on $\Omega_{m}$ if any one of the desired inequalities fails.

All the above asserts that $F$ has the desired form.

\subsection{Case $m,n \geq 4$}
The case $n \leq m$ is a consequence of part (1) of Proposition \ref{prop1}. Let us assume $n>m$. If $F=(F_{1},\ldots,F_{n-2}):\Omega_{m} \to \Omega_{n}$, then we consider
$\widetilde{F}=(F_{1},\ldots,F_{m-2}):\Omega_{m} \to \Omega_{m}$. By part (2) of Proposition \ref{prop1}, $\widetilde{F} \in {\mathbb G}_{m}$. Similarly, for each $j=1,\ldots,m-2$, the holomorphic map $\widetilde{F}_{j}=(F_{1},\ldots,F_{j-1},F_{m-1},F_{j+1},\ldots,F_{m-2}):\Omega_{m} \to \Omega_{m}$ also belongs to ${\mathbb G}_{m}$  (we have just replaced the coordinate $F_{j}$ by $F_{m-1}$). Now, for each $j$ there is a point $p_{j} \in \Omega_{m}$ such that $\widetilde{F}(P_{j})=\widetilde{F}_{j}(p_{j})$, that is, $F_{j}(p_{j})=F_{m-1}(p_{j})$. This asserts that $F(p_{j})$ cannot belong to $\Omega_{n}$ (it will have repeated coordinates), a contradiction.


\end{document}